\newtheorem{theorem}{Theorem}%[section]
\newtheorem{proposition}[theorem]{Proposition}
\newtheorem{lemma}[theorem]{Lemma}
\theoremstyle{definition}
\newtheorem{definition}{Definition}
\newtheorem{remark}{Remark}
\newcommand{\N}{\mathbb{N}}
\newcommand{\Z}{\mathbb{Z}}
\newcommand{\R}{\mathbb{R}}
\newcommand{\C}{\mathbb{C}}
\newcommand\e{\mathrm{e}}
\newcommand\I{\mathrm{i}}
\newcommand\re{\operatorname{Re}}
\newcommand\im{\operatorname{Im}}
\newcommand\eps\varepsilon
\renewcommand\epsilon\varepsilon
\renewcommand\rho\varrho
\newcommand\lm\lambda
\newcommand{\supp}{\operatorname{supp}}
\newcommand{\dist}{\operatorname{dist}}
\newcommand{\rd}{\mathrm{d}}
\title[Eigenvalue sum bounds for radial potentials]{BOUNDS FOR EIGENVALUE SUMS OF SCHRODINGER OPERATORS 
WITH COMPLEX RADIAL POTENTIALS}
\subjclass[2020]{35P15, 31Q12}
 \author{Jean-Claude Cuenin}
 \address{Department of Mathematical Sciences, Loughborough University, Loughborough,
 Leicestershire, LE11 3TU United Kingdom}
 \email{J.Cuenin@lboro.ac.uk}
 \author{Solomon Keedle-Isack}
  \address{Department of Mathematical Sciences, Loughborough University, Loughborough,
 Leicestershire, LE11 3TU United Kingdom}
 \email{S.Keedle-Isack@lboro.ac.uk}
\date{\today}
\begin{document}

\begin{abstract}
We consider eigenvalue sums of Schr\"odinger operators $-\Delta+V$ on $L^2(\R^d)$ with complex radial potentials $V\in L^q(\R^d)$, $q<d$. We prove quantitative bounds on the distribution of the eigenvalues in terms of the $L^q$ norm of $V$. 
A consequence of our bounds is that, if the eigenvalues $(z_j)$ accumulate to a point in $(0,\infty)$, then $(\im z_j)$ is summable. The key technical tools are resolvent estimates in Schatten spaces. We show that these resolvent estimates follow from spectral measure estimates by an epsilon removal argument.
\end{abstract}
\maketitle

\section{Introduction and main results}\label{section: introduction}
We consider Schrödinger operators $H=-\Delta+V$ on $L^2(\R^d)$ with complex-valued potentials $V\in L^q(\R^d)$, with $q<\infty$.
The spectrum of $H$ consists of $[0,\infty)$ together with a discrete set of eigenvalues $z_j$.
We are interested in quantitative bounds on the $z_j$ that depend only on an $L^q$ norm of the potential. 

\subsection{Bounds for single eigenvalues}
Laptev and Safronov \cite{MR2540070} conjectured that, in $d\geq 2$ dimensions, any non-positive eigenvalue $z$ of $H$ satisfies the bound
\begin{align}\label{LT bound}
|z|^{\gamma}\leq D_{\gamma,d}\int_{\R^d}|V(x)|^{\gamma+\frac{d}{2}}\rd x
\end{align}      
for $0<\gamma\leq d/2$, and with $D_{\gamma,d}$ independent of $V$ and $z$. Prior to this, Abramov-Aslanyan-Davies \cite{MR1819914} had shown that \eqref{LT bound} holds if $d=1$ and $\gamma=1/2$. For $d\geq 2$, the Laptev--Safronov conjecture was proved for $0<\gamma\leq 1/2$ by Frank \cite{MR2820160} and disproved for $\gamma>1/2$ by B\"ogli and the first author \cite{MR4561804}. The conjecture is true for \emph{radial} potentials in the range $0<\gamma<d/2$, as proved by Frank--Simon \cite{MR3713021}, and it fails for $\gamma\geq d/2$ by a counterexample of B\"ogli \cite{MR3627408}. There are some further refinements and generalisations of \eqref{LT bound} (see, e.g., \cite{MR4104544} and references therein), and there is by now a more or less complete picture of bounds of the type \eqref{LT bound} for a single eigenvalue $z$.

\subsection{Bounds for eigenvalue sums}
The situation for eigenvalue sums is considerably less well understood. The starting point is the celebrated Lieb--Thirring inequality for \emph{real-valued potentials},
  \begin{align}\label{LT bound sums}
\sum_j |z_j|^{\gamma}\leq L_{\gamma,d} \int_{\R^d}|V|^{\gamma+d/2}\rd x,
\end{align}  
which holds for $\gamma\geq 1/2$ if $d=1$ and $\gamma>0$ if $d\geq 2$. Frank--Laptev--Lieb--Seiringer \cite{MR2260376} proved that, for $\gamma\geq 1$, the inequality \eqref{LT bound sums} holds for all eigenvalues outside a fixed cone, $|\im z_j|\geq \kappa \re z_j$, with a constant that blows up as $\kappa\to 0$ at a rate $\kappa^{-\gamma-d/2}$. In $d=1$, the blowup rate was shown to be optimal by B\"ogli \cite{MR3627408}. Averaging the bound of Frank--Laptev--Lieb--Seiringer with respect to $\kappa$, Demuth--Hansmann-Katriel \cite{MR2559715} proved that
\begin{align}\label{DKH bound}
 \sum_j |z_j|^{\gamma}\left(\frac{\delta(z_j)}{|z_j|}\right)^{\gamma+d/2+\eps}\leq C_{\gamma,d,\eps} \int_{\R^d}|V|^{\gamma+d/2}\rd x,
\end{align}
where $\delta(z):=\dist(z,[0,\infty))$, $\gamma\geq 1$ and $\eps>0$. They also posed the question \cite{MR3016473} whether \eqref{DKH bound} is true for $\eps=0$. For $d=1$, B\"ogli and Štampach \cite{MR4322041} answered this question in the negative.

A rather different set of results concerning eigenvalue sums have been established by Frank--Sabin \cite{MR3730931} and Frank \cite{MR3717979}. These bounds are of the (scale-invariant) form
\begin{align}\label{Frank--Sabin type bounds}
    \left(\sum_j |z_j|^{\alpha}\left(\frac{\delta(z_j)}{|z_j|}\right)^{\beta}\right)^{\gamma/\alpha}\leq C_{\alpha,\beta,\gamma,d} \int_{\R^d}|V|^{\gamma+d/2}\rd x.
\end{align}
It would be too technical for this introduction to state the precise values of $\alpha,\beta,\gamma,d$ for which \eqref{Frank--Sabin type bounds} holds, so we will only remark some general features of these bounds.\footnote{We are indebted to Rupert Frank for communicating these remarks very clearly during an online IAMP seminar \cite{FrankIAMP}.} 
\begin{itemize}
    \item If $0<\gamma\leq 1/2$ ($\gamma=1/2$ if $d=1$) then $\beta=1$. This means that if the eigenvalues accumulate in $(0,\infty)$ then they are summable, i.e., $(\im z_j)\in\ell^1$.\\
    \item  In all known bounds, we have $\gamma/\alpha<1$. In other words, a sum is bounded by a \emph{power} of an integral strictly greater than $1$. This means that there is a loss of \emph{locality}, which is a crucial feature of the Lieb--Thirring inequality \eqref{LT bound sums}.\\
    \item One particular bound from \cite{MR3717979} for $d=1$ states that
\begin{align}\label{Frank III d=1}
\left(\sum_j \delta(z_j)^{\beta}\right)^{1/(2\beta)}\leq C_{\beta} \int_{\R^d}|V(x)|\rd x
\end{align}
holds with $\beta=1$. The first author showed in \cite{MR4426735} that the inequality fails for $\beta<1$, so the bound \eqref{Frank III d=1} is optimal.
\end{itemize}
The example of \cite{MR4426735} also shows that one can have unexpectedly many eigenvalues compared to the number of resonances. In higher dimensions, it is an open problem whether there are Schrödinger operators with complex potentials
that have significantly more eigenvalues than their real counterparts. 

\subsection{Main results}
Our main contribution is to improve the results of Frank \cite{MR3717979}, concerning bounds of the type \eqref{Frank--Sabin type bounds}, under the assumption that $V$ is \emph{radial}. As we already mentioned, radial potentials satisfy better estimates in the case of individual eigenvalues. To the best of our knowledge, such an effect has not been observed so far for sums of eigenvalues.

\begin{theorem}\label{theorem sum of eigenvalues upper bound}
Let $d\geq2$, $q\in(\frac{d+1}{2},d)$, $p>\frac{(d-1)q}{d-q}$. Then for radial $V\in L^{q}(\mathbb{R}^{d})$, the eigenvalues $z_{j}$ of $H$ satisfy 
\begin{align}\label{eq. sums of eigenvalues main theorem}
\left(\sum_{j}\delta(z_j)|z_{j}|^{p(1-\frac{d}{2q})-1}\right)^{q/p}\leq C_{p,q}\int_{\R^d}|V|^q\rd x.
\end{align}
In particular, if $(z_j)$ accumulates to a point in $(0,\infty)$, then $(\im z_j)\in \ell^1$.
\end{theorem}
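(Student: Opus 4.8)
The plan is to realize the eigenvalues as zeros of a perturbation determinant and apply a complex-analytic counting argument. Write $V = V_1 V_2$ with $|V_1| = |V_2| = |V|^{1/2}$ (say $V_1 = |V|^{1/2}$, $V_2 = |V|^{1/2}\operatorname{sgn} V$), and for $z \in \C\setminus[0,\infty)$ set $K(z) := V_2 (-\Delta - z)^{-1} V_1$. A point $z\notin[0,\infty)$ is an eigenvalue of $H$ precisely when $-1$ is an eigenvalue of $K(z)$, i.e. when the regularized Fredholm determinant $h(z) := \det_{\lceil m\rceil}(I + K(z))$ vanishes, where $m$ is chosen so that $K(z)$ lies in the Schatten class $\mathfrak{S}^m$. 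The crucial input is a Schatten-norm resolvent estimate for radial potentials of the form $\|V_2(-\Delta-z)^{-1}V_1\|_{\mathfrak{S}^{s}} \lesssim \|V\|_{L^q}\, F(z)$ with an explicit $z$-dependent factor $F$; this is exactly the type of estimate the abstract promises to derive from spectral measure estimates via an epsilon-removal argument, and I would invoke it here for the relevant exponent $s$ tied to $p$ through $p > (d-1)q/(d-q)$.

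The key steps, in order: (1) Establish the Schatten resolvent bound for radial $V\in L^q$, $q\in(\frac{d+1}{2},d)$, with the sharp power of $\delta(z)$ and $|z|$ — this is the analytic heart and I expect it to be the main obstacle, since it requires the spectral-measure estimate for the radial restriction of $(-\Delta-z)^{-1}$ together with a careful epsilon-removal to pass from a restricted-type/Lorentz bound to the clean $L^q\to\mathfrak{S}^s$ bound, controlling the blow-up as $z\to(0,\infty)$. (2) Deduce bounds on $h(z) = \det_{\lceil m\rceil}(I+K(z))$ via the standard inequality $|\det_{\lceil m\rceil}(I+A)| \le \exp(C_m \|A\|_{\mathfrak{S}^m}^m)$, giving $\log|h(z)| \lesssim (\|V\|_{L^q} F(z))^{m}$. (3) Apply a quantitative Jensen/Blaschke-type inequality for holomorphic functions on the resolvent set (after a conformal change of variables, e.g. $z\mapsto \sqrt z$ mapping $\C\setminus[0,\infty)$ to a half-plane, or working on a sequence of discs/annuli as in Demuth–Hansmann–Katriel and Borichev–Golinskii–Kupin): the zeros $z_j$ of $h$ satisfy a summability bound with weights dictated by the growth of $\log|h|$ near the boundary $[0,\infty)$ and near $\infty$. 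Matching the weight $\delta(z_j)|z_j|^{p(1-\frac{d}{2q})-1}$ to the growth exponent $F(z)$ from Step 1 yields \eqref{eq. sums of eigenvalues main theorem}, and the $q/p$ power on the left reflects the $\mathfrak{S}^m$-to-determinant passage (loss of locality).

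For the final assertion: suppose $(z_j)$ accumulates at some $\lm_0 \in (0,\infty)$. Then for $j$ large, $|z_j|$ is bounded above and below by positive constants, so the weight $|z_j|^{p(1-\frac{d}{2q})-1}$ is comparable to a constant; since $q<d$ forces $1-\frac{d}{2q}$ to have a definite sign but in any case the exponent is a fixed finite number, the weight stays bounded away from $0$ on the tail. Hence \eqref{eq. sums of eigenvalues main theorem} gives $\sum_j \delta(z_j) < \infty$ over the accumulating subsequence, and $\delta(z_j) = \dist(z_j,[0,\infty)) = |\im z_j|$ once $\re z_j > 0$, which holds for large $j$. Therefore $(\im z_j) \in \ell^1$ along the accumulating sequence, and since any other eigenvalues not accumulating to $(0,\infty)$ contribute only finitely many terms near such a point, the conclusion follows. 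I would remark that the step requiring the most care is checking that the constant in the holomorphic-function zero bound (Step 3) is uniform and does not secretly depend on the distribution of zeros, which is where the Borichev–Golinskii–Kupin machinery is needed in its quantitative form.
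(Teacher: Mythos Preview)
Your proposal is correct and follows essentially the same approach as the paper: Birman--Schwinger plus the Schatten resolvent bound (Theorem~\ref{theorem resolvent estimate}), fed into the regularized-determinant/zero-counting machinery of Borichev--Golinskii--Kupin and Demuth--Hansmann--Katriel. The paper packages your steps (2)--(3) as a direct citation of Frank \cite[Theorem~3.1]{MR3717979} (the case $\rho=0$), and note that the key resolvent input is the \emph{uniform} bound $\|K(z)\|_{\mathfrak{S}^p}\le C\|V\|_{L^q}\,|z|^{-(1-d/(2q))}$ with no $\delta(z)$ factor --- the ``sharp power of $\delta(z)$'' you anticipate is in fact zero, which is precisely what makes the final weight $\delta(z_j)^1$ rather than something larger.
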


\begin{remark}
  i)  Frank-Sabin \cite{MR3730931} proved the case $q\in(\frac{d}{2},\frac{d+1}{2}]$ with the same accumulation rate. 
  \\\\
  ii) Frank \cite{MR3717979} proved similar bounds for $q>\frac{d+1}{2}$, but with a slower accumulation rate. Moreover, Frank's bounds distinguish between eigenvalues lying in a disk around the origin and eigenvalues lying outside this disk.\\\\
  iii) The overall powers of $|z_{j}|$ in the bounds of Frank--Sabin \cite{MR3730931} and Frank \cite{MR3717979} are always negative, whereas ours are positive.\\\\
  iv) Since $q/p<(d-q)/(d-1)<1$, the bound \eqref{eq. sums of eigenvalues main theorem} exhibits the same non-locality as those in \cite{MR3730931}, \cite{MR3717979}. We conjecture that the exponent $q/p$ in \eqref{eq. sums of eigenvalues main theorem} cannot be increased, i.e.\ that the inequality
  \begin{align}
      \left(\sum_{j}\delta(z_j)^{\beta}|z_{j}|^{\beta p(1-\frac{d}{2q})-1}\right)^{q/(\beta p)}\leq C_{p,q,\beta}\int_{\R^d}|V|^q\rd x
  \end{align}
  fails for $\beta<1$. We leave it as an open problem to find a counterexample.\\\\
  v) Another interesting question is whether one can dispense with the radiality assumption and instead replace the right-hand side of \eqref{eq. sums of eigenvalues main theorem} by a constant multiple of the mixed norm
  \begin{align}
     \int_{0}^{\infty} \sup_{\omega \in S^{d-1} }|V(r\omega)|^q r^{d-1}\rd r. 
  \end{align}
  Such generalizations for bounds of single eigenvalues appear in \cite{MR3713021}.
\end{remark}

The proof of Theorem \ref{theorem sum of eigenvalues upper bound} follows the same general strategy as in \cite{MR3730931}, \cite{MR3717979}. It is based on the identification of eigenvalues of the Schr\"odinger operator $H$ with zeroes of an analytic function (a regularized determinant). This method was
pioneered by Demuth--Katriel \cite{MR2413204} and Borichev--Golinskii--Kupin \cite{MR2481997} and extended by Demuth--Hansmann--Katriel \cite{MR2559715,MR3077277}. The method rests upon a remarkable generalization of Jensen's inequality for analytic functions, due to \cite{MR2481997}, for functions that blow up at some points of the boundary. We will appeal to the quantitative version of Frank \cite[Theorem 3.1]{MR3717979}, but the special case $\rho=0$ there is essentially contained in the proof of \cite[Theorem 16]{MR3730931}. It corresponds to a blow-up at a single point ($z=0$). The main new technical ingredient in our proof is a uniform resolvent bound in trace ideals. 

\begin{theorem}\label{theorem resolvent estimate}
Let $d/2\leq q<d$ and $p>\frac{(d-1)q}{d-q}$.
Then for all $z\in\C\setminus[0,\infty)$ and for all radial functions $W_1,W_2\in L^{q}(\R^d)$,     
\begin{align}\label{eq. resolvent estimate}
        \|W_1(-\Delta-z)^{-1}W_2\|_{\mathfrak{S}^p(L^2(\R^d))}\leq C_{p,q,d} |z|^{\frac{d}{2q}-1}\|W_1\|_{L^{2q}}\|W_2\|_{L^{2q}}.
    \end{align}  
\end{theorem}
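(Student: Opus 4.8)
The plan is to reduce the Schatten-norm bound to a \emph{spectral measure estimate} for the free Laplacian and then to pass from the latter to the resolvent by an $\eps$-removal argument. First I would eliminate $z$ by scaling: conjugating $W_1(-\Delta-z)^{-1}W_2$ by the unitary dilation $(D_\lambda f)(x)=\lambda^{d/2}f(\lambda x)$ with $\lambda=|z|^{1/2}$ leaves the $\mathfrak{S}^p$-norm unchanged, replaces $(-\Delta-z)^{-1}$ by $|z|^{-1}(-\Delta-z/|z|)^{-1}$, and replaces each $W_i$ by $W_i(\lambda^{-1}\cdot)$, whose $L^{2q}$-norm equals $|z|^{d/(4q)}\|W_i\|_{L^{2q}}$; the three factors multiply to the stated weight $|z|^{\frac{d}{2q}-1}$, and radiality is preserved. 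Thus it suffices to prove \eqref{eq. resolvent estimate} for $z=\e^{\I\theta}$ with a constant independent of $\theta\in(-\pi,\pi)\setminus\{0\}$, the only difficulty being uniformity as $\theta\to0$, i.e.\ as $z$ approaches the threshold $1\in(0,\infty)$.

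Second, I would use the spectral representation $(-\Delta-z)^{-1}=\int_0^\infty(\mu-z)^{-1}\,\rd E'(\mu)$, with $E'(\mu)$ the density at energy $\mu$ of the spectral measure of $-\Delta$, together with the spectral measure estimate
\[
\|W_1E'(\mu)W_2\|_{\mathfrak{S}^p}\le C_{p,q,d}\,\mu^{\frac{d}{2q}-1}\|W_1\|_{L^{2q}}\|W_2\|_{L^{2q}}\qquad(\mu>0,\ W_i\ \text{radial})
\]
as the basic input. For $q\le\frac{d+1}{2}$ this is the (non-radial) estimate behind \cite{MR3730931}, coming from the Stein--Tomas theorem; for $\frac{d+1}{2}<q<d$ it is the new radial estimate, which I would derive from the spherical-harmonic decomposition of $-\Delta$ and sharp Bessel/Hankel asymptotics in the spirit of Frank--Simon \cite{MR3713021}. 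Away from the cosphere the spectral integral converges absolutely: for $0<\mu\le\frac12$ one has $|\mu-z|\ge\frac12$ and $\int_0^{1/2}\mu^{\frac{d}{2q}-1}\rd\mu<\infty$, while for $\mu\ge2$ one has $|\mu-z|\gtrsim\mu$ and $\int_2^\infty\mu^{\frac{d}{2q}-2}\rd\mu<\infty$ when $q>\frac d2$ (the endpoint $q=d/2$ requiring an extra dyadic-in-frequency decomposition). Hence the entire estimate reduces to bounding the contribution of $\mu\in[\tfrac12,2]$, i.e.\ of frequencies $|\xi|^2\sim1$, uniformly in $z$.

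For that contribution the triangle inequality loses a logarithm: splitting $[\tfrac12,2]$ into dyadic shells $|\mu-1|\sim2^{-k}$ with $0\le k\lesssim\log(1/\delta(z))$, each shell contributes $O(1)$ by the spectral measure estimate while $|\mu-z|^{-1}\lesssim2^{k}$, so the total is $O(\log(1/\delta(z)))$; this loss is genuine at that level, since $\int_{1/2}^2|\mu-z|^{-1}\rd\mu\sim\log(1/\delta(z))$. To remove it I would first establish the \emph{local} bound
\[
\big\|\mathbf 1_{B_R}W_1(-\Delta-z)^{-1}W_2\mathbf 1_{B_R}\big\|_{\mathfrak{S}^p}\lesssim_{\eps}R^{\eps}\|W_1\|_{L^{2q}}\|W_2\|_{L^{2q}}\qquad(|z|=1),
\]
the point being that on $B_R$ the spatial cutoffs cannot resolve frequency shells thinner than $R^{-1}$, so only the $\lesssim\log R$ coarsest shells contribute and $\log R\lesssim_\eps R^\eps$; one may sharpen the $R^\eps$ loss using a scale-$R$ refinement of the spectral measure estimate obtained by partitioning the cosphere into $R^{-1/2}$-caps and invoking radial symmetry. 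Then I would run the standard $\eps$-removal bootstrap: decompose $W_1,W_2$ into pieces supported on balls of radius $R$, apply the local bound to each pair, exploit almost-orthogonality of the frequency-localized resolvent pieces in $\mathfrak{S}^p$, sum, and let $R\to\infty$ (equivalently $\delta(z)\to0$) to absorb the loss and obtain the clean bound at $|z|=1$. Undoing the scaling finishes the proof.

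I expect the decisive obstacle to be this last step: making the orthogonality mechanism of the $\eps$-removal function inside the Schatten ideal $\mathfrak{S}^p$ and in the presence of the multipliers $W_1,W_2$, which are radial but otherwise arbitrary and therefore do not respect frequency localization. A logically separate difficulty is the radial spectral measure estimate itself over the full range $q<d$: the spherical-harmonic reduction turns it into a sum over angular momenta $\ell$ of one-dimensional bounds carrying the large multiplicities $\sim\ell^{d-2}$, and balancing these against the Bessel asymptotics while retaining the sharp exponent $p>\frac{(d-1)q}{d-q}$ is delicate.
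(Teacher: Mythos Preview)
Your overall architecture matches the paper's: scale to $|z|=1$, use the radial spectral-measure estimate (derived exactly as you suggest, via spherical harmonics and Bessel asymptotics balanced against the multiplicities $\dim\mathcal H_k\sim k^{d-2}$), observe that the only difficulty is the shell $\mu\sim 1$, trade the $\log(1/\delta(z))$ loss for a $\log R$ loss by spatial localization, and then run an $\eps$-removal. So the strategy is right.

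The gap is in the $\eps$-removal itself. You propose to decompose $W_1,W_2$ into pieces on balls of radius $R$ and then invoke ``almost-orthogonality of the frequency-localized resolvent pieces in $\mathfrak S^p$''. This is not the mechanism that works, and as stated it is not clear what it would mean: the weights $W_i$ do not respect any frequency localization, the resolvent near the cosphere is a single multiplier (not a sum of frequency-disjoint pieces), and there is no Cotlar--Stein type lemma available in $\mathfrak S^p$ for $p>2$ that would let you sum such pieces. Also, ``let $R\to\infty$'' cannot absorb a loss that is uniform in $R$; the loss has to be beaten by something that decays.

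What the paper actually does is purely \emph{spatial}. One first performs a horizontal (level-set) dyadic decomposition of the radial profiles $w,w'$, and then, within each level set, a \emph{sparse} decomposition \`a la Tao: each piece sits in a ball of radius $R_i$, and balls in the same family have centers separated by $(N_iR_i)^\gamma$. The summability over pieces does not come from orthogonality at all; it comes from the pointwise kernel decay $|R_0(z)^{\rm low}(x-y)|\lesssim(1+|x-y|)^{-(d-1)/2}$, which yields an off-diagonal $\mathfrak S^{1+\delta}$ bound with a negative power of the separation. Interpolating this against the $\log R$-lossy $\mathfrak S^{p_0}$ bound gives, for each pair of pieces, a singular-value estimate carrying both a small loss in the Schatten exponent and a decaying factor in the separation; the sparsity forces the latter to sum to $O(1)$. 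The price is that one only recovers exponents strictly below the endpoint, which is harmless here since the hypothesis $p>\frac{(d-1)q}{d-q}$ is open. Finally, radiality enters the $\eps$-removal only through applying the one-dimensional version of Tao's sparse lemma to the radial profiles $w(r),w'(r)$ rather than to $W_1,W_2$ themselves.
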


\begin{remark}
    i) Theorem \ref{theorem resolvent estimate} extends the uniform resolvent estimates of Frank--Sabin \cite[Theorem 12]{MR3730931} from $q\leq (d+1)/2$ to $q<d$ under the assumption of radial symmetry.\\\\
    ii) Our Schatten exponent $p$ coincides with that of \cite{MR3730931} and is optimal in the range $q\leq (d+1)/2$, as shown in \cite[Theorem 6]{MR3730931}. The resolvent bounds of Frank--Sabin hold for arbitrary potentials, not just radial potentials. However, their counterexample is radial and the argument remains valid for $q>(d+1)/2$, so the same example shows optimality of the Schatten exponent $p$ in Theorem \ref{theorem resolvent estimate}.\\\\
    iii) The Schatten norm bound in Theorem \ref{theorem resolvent estimate} is crucial for applications involving sums of eigenvalues. For bounds involving only individual eigenvalues, a weaker bound for the operator norm would suffice. By H\"older's inequality, the operator norm bound is equivalent to an $L^p\to L^{p'}$ bound for $(-\Delta-z)^{-1}$. In the range $d/2\leq q\leq (d+1)/2$, this is a special case of the uniform resolvent bounds of Kenig--Ruiz--Sogge \cite{MR894584}. 
    The idea of using uniform resolvent bounds in the context of eigenvalue bounds for Schr\"odinger operators with complex potentials is due to Frank \cite{MR2820160}.\\\\
    iv) For radial potentials, Frank--Simon proved uniform $L^p\to L^{p'}$ bounds (equivalently, bounds with the operator norm instead of a Schatten norm in the left hand side of \eqref{eq. resolvent estimate}) in the optimal range $d/2\leq q<d$. Thus, one of our main contributions is to upgrade their bounds to stronger trace ideal bounds. 
\end{remark}

\section{Spectral Measure Estimate}
Let $E(\lambda)=\mathbf{1}_{[0,\lambda]}(\sqrt{-\Delta})$. The spectral function can be factorized as
\begin{align}\label{spectral measure factorization}
   \rd E(\lambda)/\rd\lambda=c_d\lambda^{d-2}\mathcal{E}(\lambda)\mathcal{E}(\lambda)^*,
\end{align}
 where $\mathcal{E}(\lambda):L^1(\mathbb{S}^{d-1})\to L^{\infty}(\R^d)$ is the Fourier extension operator
\begin{align}
\mathcal{E}(\lambda)g(x)=\int_{\mathbb{S}^{d-1}}\e^{\I\lambda x\cdot\xi}
g(\xi)\rd S(\xi),\quad x\in\R^d,\quad \lambda>0,
\end{align}
and where $\rd S$ denotes induced Lebesgue measure on the unit sphere $\mathbb{S}^{d-1}$. If $\lambda=1$, we will write $\mathcal{E}:=\mathcal{E}(1)$.

The main result of this section is the following precursor to Theorem \ref{theorem resolvent estimate}.

\begin{theorem}\label{theorem spectral measure bound}
Let $1\leq q<d$ and $p>\frac{(d-1)q}{d-q}$.
Then for all $\lambda>0$ and for all $W_1,W_2\in L^{2q}_{\rm rad}(\R^d)$,     
\begin{align}\label{eq. spectral measure bound}
        \|W_1(\rd E(\lambda)/\rd\lambda)W_2\|_{\mathfrak{S}^p(L^2(\R^d))}\leq C_{p,q,d} \lambda^{\frac{d}{q}-2}\|W_1\|_{L^{2q}}\|W_2\|_{L^{2q}}.
    \end{align}  
\end{theorem}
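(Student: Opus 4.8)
The plan is to reduce the estimate, by scaling and a decomposition into spherical‑harmonic sectors, to an $\ell^{2p}$‑summability statement for a sequence of weighted Bessel norms, and then to prove that statement by exploiting the spatial separation of the transition regions of the Bessel functions $J_{\nu_\ell}$.

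First I would rescale: conjugating with the unitary dilation $U_\lambda f=\lambda^{d/2}f(\lambda\,\cdot)$ and using \eqref{spectral measure factorization} together with the identity $\mathcal E(\lambda)=\lambda^{-d/2}U_\lambda\mathcal E(1)$, one checks that $W_1(\rd E(\lambda)/\rd\lambda)W_2$ is unitarily equivalent to $\lambda^{-2}M_{W_1(\lambda^{-1}\cdot)}\,(\rd E/\rd\lambda\big|_{1})\,M_{W_2(\lambda^{-1}\cdot)}$; since $\|W_j(\lambda^{-1}\cdot)\|_{L^{2q}}=\lambda^{d/(2q)}\|W_j\|_{L^{2q}}$, this produces exactly the weight $\lambda^{d/q-2}$, so it suffices to treat $\lambda=1$. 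Next I would split $L^2(\R^d)=\bigoplus_{\ell\ge0}L^2((0,\infty),r^{d-1}\rd r)\otimes\mathcal Y_\ell$ into sectors ($\mathcal Y_\ell$ the spherical harmonics of degree $\ell$, $N_\ell=\dim\mathcal Y_\ell\asymp\ell^{d-2}$), which $-\Delta$ and radial multiplications both respect. By Bochner's relation $\mathcal E(1)Y=c\,b_\ell\otimes Y$ for $Y\in\mathcal Y_\ell$, with $b_\ell(r)=r^{-(d-2)/2}J_{\nu_\ell}(r)$ and $\nu_\ell=\ell+\tfrac d2-1$; hence $W_1\mathcal E(1)\mathcal E(1)^*W_2$ acts on the $\ell$‑th sector as $c'(W_1b_\ell\otimes W_2b_\ell)\otimes\mathrm{id}_{\mathcal Y_\ell}$, a rank‑$N_\ell$ operator all of whose nonzero singular values equal $c'\|W_1b_\ell\|_2\|W_2b_\ell\|_2$ (norms in $L^2((0,\infty),r^{d-1}\rd r)$). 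Summing over $\ell$ and applying Cauchy–Schwarz, the theorem reduces to the single‑function bound
\begin{equation}\label{eq:Bsum}
\sum_{\ell\ge0}N_\ell\,\|Wb_\ell\|_{L^2(r^{d-1}\rd r)}^{2p}\ \lesssim\ \|W\|_{L^{2q}(\R^d)}^{2p},\qquad W\ \text{radial.}
\end{equation}

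To prove \eqref{eq:Bsum} I would decompose $W^2=\sum_{k\in\Z}v_k$ with $v_k$ supported in the annulus $A_k=\{2^k\le|x|<2^{k+1}\}$, so that $\|Wb_\ell\|_2^2=\sum_k\int_{A_k}v_k|b_\ell|^2r^{d-1}\rd r$, and feed in the standard uniform Bessel bounds: $J_\nu$ is exponentially small for $r\le\nu/2$, $|J_\nu(r)|\lesssim\nu^{-1/3}$ always, and $|J_\nu(r)|\lesssim(r^2-\nu^2)^{-1/4}$ for $r>\nu$ (so $|J_\nu(r)|\lesssim r^{-1/2}$ for $r\ge2\nu$). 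For $2^k\gtrsim\nu_\ell$, outside the transition annulus one has $|b_\ell(r)|^2\lesssim r^{-(d-1)}$, whence by Hölder $\int_{A_k}v_k|b_\ell|^2r^{d-1}\rd r\lesssim\int_{A_k}v_k\,\rd r\lesssim 2^{-k(d-q)/q}\|v_k\|_{L^q}$ \emph{uniformly in $\ell$}. Since $\sum_{\ell:\,\nu_\ell\lesssim2^k}N_\ell\asymp2^{k(d-1)}$ and $\sum_k\|v_k\|_{L^q}^q=\|W\|_{L^{2q}}^{2q}$, a routine rearrangement of the two sums (using $p>q$, which follows from $p>\tfrac{(d-1)q}{d-q}$ and $q\ge1$) shows that this ``regular'' part of \eqref{eq:Bsum} is finite precisely when $p>\tfrac{(d-1)q}{d-q}$, i.e.\ exactly the claimed range. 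It remains to control the transition region $r\sim\nu_\ell$, an annulus of thickness $\sim\nu_\ell^{1/3}$ on which $|J_{\nu_\ell}|\lesssim\nu_\ell^{-1/3}$: the trivial bound loses a factor $\nu_\ell^{1/3}$ per sector, but the transition annuli of distinct sectors are essentially disjoint, and only $O(\nu_\ell^{1/3})$ sectors transition inside any given thin annulus. Extracting this by subdividing $A_k$ into sub‑annuli of thickness $\sim2^{k/3}$ and distributing $W$ accordingly, one finds the transition part of \eqref{eq:Bsum} finite under the strictly weaker condition $p>\tfrac{q(3d-5)}{3d-2q-2}$, and one verifies $\tfrac{q(3d-5)}{3d-2q-2}<\tfrac{(d-1)q}{d-q}$ for all $d\ge2$, $1\le q<d$. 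The intermediate range $\nu_\ell<r<2\nu_\ell$ (a further dyadic decomposition in $r-\nu_\ell$) and the deep evanescent region $r\ll\nu_\ell$ (where $b_\ell(r)=O(r^\ell)$ makes all contributions negligible) are handled in the same spirit and cost nothing.

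The main obstacle is this last step. A sector‑by‑sector application of Hölder's inequality to $\|Wb_\ell\|_2$ is blind to the fact that a fixed $W\in L^{2q}$ is spread across different radial scales and cannot be extremal for all sectors simultaneously; it therefore absorbs the Airy‑type $\nu_\ell^{1/3}$ factor from the transition region and yields only a suboptimal exponent. Recovering the sharp range $p>\tfrac{(d-1)q}{d-q}$ forces the dyadic‑in‑radius decomposition together with a careful count of how many sectors are near transition at each scale, and a delicate interchange of the inner dyadic sum with the outer sum over sectors right at the borderline exponent.
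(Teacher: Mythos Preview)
Your reduction via scaling and the spherical-harmonic decomposition to
\begin{align*}
\sum_{\ell\ge 0}N_\ell\,\|Wb_\ell\|_{L^2(r^{d-1}\rd r)}^{2p}\lesssim\|W\|_{L^{2q}}^{2p}
\end{align*}
matches the paper's (the paper works with the dual operator $\Sigma=\mathcal E^*V\mathcal E$ on $L^2(\mathbb S^{d-1})$, whose eigenvalue on $\mathcal H_\ell$ is exactly $\lambda_\ell=\|Wb_\ell\|_2^2$ up to a constant). The difference is in how the $\ell$-sum is controlled. The paper does precisely the sector-by-sector H\"older step you call insufficient,
\begin{align*}
\|Wb_\ell\|_2^2\le \|W^2\|_{L^q}\Big(\int_0^\infty|J_{\nu_\ell}(r)|^{2q'}r^{\,d-1+q'(2-d)}\,\rd r\Big)^{1/q'},
\end{align*}
and then quotes a known Bessel-integral bound (Lemma~\ref{Frank-Simon Bessel integral estimate}, from Frank--Simon and Barcel\'o--Ruiz--Vega) of the form $\max\{\mu^{-q'+\rho+1},\mu^{-2q'/3+\rho+1/3}\}$. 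For $q\ge 2$ (i.e.\ $q'\le 2$) the oscillatory term is the maximum, and summing against $N_\ell\asymp\ell^{d-2}$ gives exactly the threshold $p>\tfrac{(d-1)q}{d-q}$; so in that range your claim that H\"older ``absorbs the Airy-type $\nu_\ell^{1/3}$ factor and yields only a suboptimal exponent'' is wrong, and your dyadic-in-radius decomposition with overlap counting is unnecessary---the paper's argument is a few lines. Your instinct is partly vindicated for $1\le q<2$, though: there the transition term dominates the Bessel integral and H\"older alone produces only the stricter condition $p>\tfrac{3(d-1)q}{3d-2q-2}$, so in that sub-range something like your disjointness argument for the transition annuli (or an interpolation) is genuinely needed to reach the stated threshold, and the paper's displayed ``$\iff$'' is imprecise there.
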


\begin{proof}
    By scaling, we may assume without loss of generality that $\lambda=1$.
    By \eqref{spectral measure factorization} and a $TT^*$ argument (to reduce to the case $W_1=\overline{W_2}$), it then suffices to prove 
\begin{align}
\|W\mathcal{E}\mathcal{E}^*\overline{W}\|_{\mathfrak{S}^p(L^2(\R^d))}\leq C_{p,q,d} \|W\|_{L^{2q}}^2,\quad\forall W\in L_{\rm rad}^{2q}(\R^d),
    \end{align}  
or equivalently
\begin{align}\label{eq. EW^2E bound}
\|\mathcal{E}V\mathcal{E}^*\|_{\mathfrak{S}^p(L^2(\R^d))}\leq C_{p,q,d} \|V\|_{L^{q}}\quad\forall V\in L_{\rm rad}^q(\R^d).
    \end{align}  
To prove \eqref{eq. EW^2E bound}, we set
\begin{align*}
\Sigma:=\mathcal{E}^*V\mathcal{E}:L^2(\mathbb{S}^{d-1})\to L^2(\mathbb{S}^{d-1}).
\end{align*}
We use $\mathcal{H}_{k}$ and $\mathcal{A}_{k}$ to denote the $k$-th degree spherical harmonics and solid spherical harmonics respectively. Since $L^{2}(\mathbb{S}^{d-1})=\bigoplus_{k=0}^{\infty}\mathcal{H}_{k}$, we can decompose $\Sigma$ into an orthogonal sum $\Sigma=\bigoplus_{k=0}^{\infty}\Sigma_{k}$, where
\begin{align}
\Sigma_{k}:=\mathcal{E}^{*}V\mathcal{E}|_{\mathcal{H}_{k}}:\mathcal{H}_{k}\to\mathcal{H}_{k}.
\end{align}
The fact that $\Sigma_k$ maps $\mathcal{H}_{k}$ to itself follows from corresponding mapping properties of the Fourier transformation. More precisely, it follows from \cite[Thm. 3.10]{MR304972} that if $f\in (L^1\cap L^2)(\R^d)$ and $f(x)=f_0(|x|)P(x/|x|)$, where $P\in\mathcal{A}_k$, then $\widehat{f}(\xi)=F_0(|\xi|)P(\xi)$, where
\begin{align*}
F_0(r)=2\pi \I^{-k}r^{-(d+2k-2)/2}\int_0^{\infty}f_0(s)J_{(d+2k-2)/2}(2\pi rs)s^{(d+2k)/2}\rd s,
\end{align*}
and $J_{\nu}$ is a Bessel function.
Note that $P$ is homogeneous of degree $k$, i.e.\ $P(\xi)=r^kP(\omega)$, $r=|\xi|$, $\omega=\xi/r$. We set $H=P|_{\mathbb{S}^{d-1}}\in\mathcal{H}_k$ and $\widetilde{f_0}(r):=r^kf_0(r)$. Then we have
$\widehat{f}(\xi)=\widetilde{F_0}(r)H(\omega)$, with 
\begin{align*}
\widetilde{F_0}(r)=2\pi \I^{-k}r^{-(d-2)/2}\int_0^{\infty}\widetilde{f_0}(s)J_{(d+2k-2)/2}(2\pi rs)s^{d/2}\rd s.
\end{align*}
Hence, the Fourier transform of the function $x\mapsto\widetilde{f_0}(|x|)H(x/|x|)$, where $H\in\mathcal{H}_k$, is the function $\xi\mapsto\widetilde{F_0}(r)H(\omega)$.
In particular,
\begin{align*}
\mathcal{E}^*(x\mapsto\widetilde{f_0}(|x|)H(x/|x|))(r\omega)=\widetilde{F_0}(1)H(\omega)
=\langle \varphi_k,\widetilde{f_0}\rangle_{L^2(\R_+,r^{d-1}\rd r)}H(\omega),
\end{align*}
where 
\begin{align*}
\varphi_k(s):=2\pi \I^{-k}J_{(d+2k-2)/2}(2\pi s)s^{-d/2+1}.
\end{align*}
A straightforward calculation shows that, for $H\in \mathcal{H}_k$,
\begin{align*}
\Sigma_k H=\lambda_k H,\quad \lambda_k:=\left(\int_0^{\infty}|\varphi_k(s)|^2v(s)s^{d-1}\rd s\right),\quad v(|x|):=V(x).
\end{align*}
Hence, $\Sigma_k$ is a multiple of the identity $\mathbf{1}_{\mathcal{H}_k}$, with eigenvalue $\lambda_k$ of multiplicity $\dim\mathcal{H}_k\approx k^{d-2}$. By Hölder,
\begin{align*}
\lambda_k&=(2\pi)^2\int_0^{\infty}|J_{(d+2k-2)/2}(2\pi s)|^2w(s)s\rd s\\
&\leq (2\pi)^2 \left(\int_0^{\infty}|J_{(d+2k-2)/2}(2\pi s)|^{2p}s^{d-1+q'(2-d)}\rd s\right)^{1/q'}\left(\int_0^{\infty}|v(s)|^{q}s^{d-1}\rd s\right)^{1/q}.
\end{align*}
If we set $\varrho=d-1+q'(2-d)$, then the conditions of Lemma~\ref{Frank-Simon Bessel integral estimate} below are satisfied for $d\geq2$ and 
\begin{align}
    q'>\max\left\{\frac{d}{d-1},\frac{d-\frac{2}{3}}{d-\frac{4}{3}}\right\}=\frac{d}{d-1}\iff q<d.
\end{align}
Hence, for $q<d$, we obtain 
\begin{equation}\nonumber
    \lambda_{k}\lesssim\max\left\{k^{-q'+\varrho+1},k^{-\frac{2q'}{3}+\varrho+\frac{1}{3}}\right\}^{1/q'}\|V\|_{L^q}.
\end{equation}
By orthogonality, we then have 
\begin{align*}
\|\Sigma\|_{\mathfrak{S}^{p}}^{p}=\sum_{k=0}^{\infty}\|\Sigma_k\|_{\mathfrak{S}^{p}}^{p}=\sum_{k=0}^{\infty}\dim\mathcal{H}_k\lambda_k^{p}\lesssim \sum_{k=0}^{\infty}k^{d-2}\lambda_k^{p}\lesssim \|V\|_{L^q}^{p},
\end{align*}
provided that $q<d$ and 
\begin{align*}
d-2+\max(-q'+\rho+1,-\frac{2}{3}q'+\rho+\frac{1}{3})\frac{p}{q'}<-1\iff p>\frac{(d-1)q}{d-q}.
\end{align*}
This completes the proof of Theorem \ref{theorem spectral measure bound}.
\end{proof}

In the previous proof we used
the following lemma concerning bounds on integrals of Bessel functions. It is a special case of a result of Barcelo et al. \cite{BarceloRuizVega}. The specific bound we require is stated below and can be inferred from \cite[Lemma A.3]{MR3713021}. 

\begin{lemma}\label{Frank-Simon Bessel integral estimate}
    For $\varrho\in\mathbb{R}$ and $p>\varrho+1$, $\frac{2p}{3}>\varrho+\frac{1}{3}$, we have 
    \begin{equation}\nonumber
        \int_{1}^{\infty}|J_{\mu}(2\pi s)|^{2p}s^{\varrho}\,ds\lesssim\max\left\{\mu^{-p+\varrho+1},\mu^{-\frac{2p}{3}+\varrho+\frac{1}{3}}\right\}. 
    \end{equation}
    for all $\mu>1/2$, $p\neq2$. If $p=2$ then the bound gains an extra factor of $\log\mu$. 
\end{lemma}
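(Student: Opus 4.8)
The plan is to reduce to the regime of large $\mu$ and then split the integral according to the size of $2\pi s$ relative to the order $\mu$, using the classical pointwise asymptotics of Bessel functions (as in \cite{BarceloRuizVega} or in the appendix of \cite{MR3713021}). After the substitution $r=2\pi s$ the integral equals a constant multiple of $J(\mu):=\int_{2\pi}^{\infty}|J_{\mu}(r)|^{2p}r^{\varrho}\,\rd r$. For $\mu$ in a bounded interval, say $\tfrac12<\mu\le\mu_{0}$, the turning point $r\approx\mu$ lies to the left of $2\pi$, so $|J_{\mu}(r)|\lesssim r^{-1/2}$ holds uniformly on $[2\pi,\infty)$ and hence $J(\mu)\lesssim\int_{2\pi}^{\infty}r^{\varrho-p}\,\rd r<\infty$ by the hypothesis $p>\varrho+1$; since the right-hand side of the asserted estimate is bounded below by a positive constant on this interval, it suffices to establish the bound for $\mu\ge\mu_{0}$ with $\mu_{0}$ a suitably large fixed number.

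For $\mu\ge\mu_{0}$ I would write $J(\mu)=J_{\mathrm{I}}+J_{\mathrm{II}}+J_{\mathrm{III}}$ for the ranges $r\in[2\pi,\mu/2]$, $r\in[\mu/2,2\mu]$ and $r\in[2\mu,\infty)$, and use the standard pointwise bounds
\begin{align*}
&|J_{\mu}(r)|\lesssim\e^{-c\mu}\qquad(2\pi\le r\le\tfrac{\mu}{2}),\\
&|J_{\mu}(r)|\lesssim\mu^{-1/4}\bigl(\mu^{1/3}+|r-\mu|\bigr)^{-1/4}\qquad(\tfrac{\mu}{2}\le r\le2\mu),\\
&|J_{\mu}(r)|\lesssim r^{-1/2}\qquad(r\ge2\mu),
\end{align*}
valid for some $c>0$. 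In the first range the integrand is $\lesssim\e^{-2pc\mu}r^{\varrho}$, so $J_{\mathrm{I}}$ decays faster than any power of $\mu$ and is therefore dominated by the right-hand side of the asserted bound. In the third range $|J_{\mu}(r)|^{2p}\lesssim r^{-p}$, whence $J_{\mathrm{III}}\lesssim\int_{2\mu}^{\infty}r^{\varrho-p}\,\rd r\lesssim\mu^{-p+\varrho+1}$, again using $p>\varrho+1$ for convergence at infinity.

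The main term is $J_{\mathrm{II}}$. Since $r\asymp\mu$ on $[\mu/2,2\mu]$ one has $r^{\varrho}\asymp\mu^{\varrho}$, and raising the Airy-type bound to the power $2p$ and substituting $u=\mu^{-1/3}(r-\mu)$ gives
\begin{align*}
J_{\mathrm{II}}&\lesssim\mu^{-2p/3+\varrho}\int_{\mu/2}^{2\mu}\bigl(1+\mu^{-1/3}|r-\mu|\bigr)^{-p/2}\,\rd r\\
&=\mu^{-2p/3+\varrho+1/3}\int_{-\mu^{2/3}/2}^{\mu^{2/3}}(1+|u|)^{-p/2}\,\rd u.
\end{align*}
The last integral is $O(1)$ if $p>2$, $O(\log\mu)$ if $p=2$, and $O(\mu^{2/3-p/3})$ if $p<2$, so that $J_{\mathrm{II}}\lesssim\mu^{-2p/3+\varrho+1/3}$, $\mu^{-2p/3+\varrho+1/3}\log\mu$, or $\mu^{-p+\varrho+1}$ in these three cases. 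Summing $J_{\mathrm{I}}$, $J_{\mathrm{II}}$, $J_{\mathrm{III}}$ yields $J(\mu)\lesssim\max\{\mu^{-p+\varrho+1},\mu^{-2p/3+\varrho+1/3}\}$, with an extra $\log\mu$ precisely when $p=2$; the hypotheses $p>\varrho+1$ and $\tfrac{2p}{3}>\varrho+\tfrac13$ ensure that both exponents are negative, so that this is a genuine decay estimate.

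I expect the only mildly delicate point to be the transition range $J_{\mathrm{II}}$, which requires the uniform Airy-type bound for $J_{\mu}$ in a neighbourhood of its turning point together with the bookkeeping of the three cases $p\gtrless 2$ (and the logarithmic loss at $p=2$); everything else is elementary. Since all of this is classical, the simplest route in the paper is to invoke \cite{BarceloRuizVega} (equivalently, \cite[Lemma A.3]{MR3713021}), of which the lemma is a special case.
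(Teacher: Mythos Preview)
Your proof sketch is correct and follows the standard approach: the three-region decomposition with exponential decay below the turning point, the uniform Airy-type bound near $r\approx\mu$, and the oscillatory $r^{-1/2}$ decay above, together with the case split $p\lessgtr 2$ in the transition integral. The paper itself does not supply a proof of this lemma at all; it merely records it as a special case of \cite{BarceloRuizVega} and refers to \cite[Lemma~A.3]{MR3713021}, exactly as you anticipate in your final paragraph. So you have in fact written out more than the paper does, and what you wrote is the argument underlying those cited results.
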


\section{Resolvent estimates}

 \subsection{Naive bound}
 In the following, we use the abbreviation $R_{0}(z):=(-\Delta-z)^{-1}$, $z\in\C\setminus[0,\infty)$.
 To prove the resolvent estimate~\eqref{eq. resolvent estimate}, we can assume $|z|=1$ (by scaling). We only consider the case $\re z>0$, since the case $\re z\leq 0$ is much easier.
 We split $R_0(z)=R_0(z)^{\rm high}+R_0(z)^{\rm low}$, where $R_0(z)^{\rm low}$ has Fourier multiplier $(\xi^2-\re z)^{-1}\chi(\xi/\sqrt{\re z})$ and $\chi$ is a smooth, radial function on $\R^d$ that equals $1$ on $B(0,2)$ and is supported on $B(0,4)$. Again, we only consider $R_0(z)^{\rm low}$ since the estimate for $R_0(z)^{\rm high}$ is much easier. Observing that 
 \begin{align}\label{spectral theorem for R0(z)}
  W_1R_0(z)^{\rm low}W_2=\int_0^{\infty}\frac{\chi(\lambda/\sqrt{\re z})}{\lambda^2-z}W_1\frac{\rd E(\lambda)}{\rd \lambda}   W_2 \,\rd \lambda
 \end{align}
and using the spectral measure bound \eqref{eq. spectral measure bound}, we find 
\begin{align}
  \|W_1R_0(z)^{\rm low}W_2\|_{\mathfrak{S}^p(L^2(\R^d))}\lesssim |\log|\im z||\|W_1\|_{L^{2q}}\|W_2\|_{L^{2q}}.   
\end{align}
This is almost the desired bound, up to a logarithm. At this point we note that, due to the spectral localization $\chi$, we may assume that $W$ is constant on the unit scale. Hence, in the following, we will assume that $W_1,W_2$ are supported on a union of $c$-cubes where $c\ll 1$.

\subsection{Smoothing}
The first step is to trade the logarithmic loss in $|\im z|$ into a logarithmic loss involving the size of the supports of $W_1, W_2$. 
Hence, we will temporarily assume $\supp(W_1), \supp(W_2)\subset B_R$, where $B_R$ is a ball in $\R^d$ with radius $R\gg 1$ (the center of the ball will not be important). The spatial localization to $B_R$ smooths off the integrand in \eqref{spectral theorem for R0(z)} on the $1/R$ scale, i.e., when using the triangle inequality, $|\lambda^2-z|$ may be replaced by $|\lambda^2-z|+1/R$. To make this rigorous, observe that, by the convolution theorem, 
\begin{align*}
\mathbf{1}_{B_R}m(D)\mathbf{1}_{B_R}=\mathbf{1}_{B_R}m_{R}(D)\mathbf{1}_{B_R}
\end{align*}
whenever $m(D)$ is a Fourier multiplier, where $m_R:=\varphi_R\ast m$, $\varphi_R(\xi):=R^d\varphi(R\xi)$ and $\varphi$ is a Schwartz function such that $\widehat{\varphi}=1$ on $B(0,2)$. 
The previous argument then yields
\begin{align}\label{R_0 log R loss}
  \|W_1R_0(z)^{\rm low}W_2\|_{\mathfrak{S}^p(L^2(\R^d))}\lesssim (\log R)|\|W_1\|_{L^{2q}}\|W_2\|_{L^{2q}}.   
\end{align}

\subsection{Removal of the logarithm}
To remove the $\log R$ factor, we partly follow the strategy of the first author and Merz \cite[Sect. 7.2]{cuenin2022random}. The idea
is reminiscent of an `epsilon removal
lemma' of Tao \cite{MR1666558} in the context of Fourier restriction theory.   
Compared to \cite{cuenin2022random}, the strategy is easier to explain in our setting since the potential is not random. Thus, we will only need to deal with the bilinear operator $(W_1,W_2)\mapsto W_1R_0(z)^{\rm low}W_2$, as opposed to multilinear operators with more than one factor of $R_0(z)^{\rm low}$, which had to be considered in \cite{cuenin2022random}. On the other hand, we work with Schatten norms, whereas \cite{cuenin2022random} deals with operator norms.

To illustrate the method, we will first prove a version without the radial symmetry assumption.

\begin{proposition}
\label{From spectral measure to resolvent bounds}
Let $d\geq 2$, $1\leq p_0,q_0<\infty$, and assume that the estimate
\begin{align}\label{prototype spectral measure estimate}
        \|W_1\mathcal{E}\mathcal{E}^*W_2\|_{\mathfrak{S}^{p_0}(L^2(\R^d))}\leq C(\log R)^s \|W_1\|_{L^{2q_0}}\|W_2\|_{L^{2q_0}}
    \end{align}  
holds for all $W_1,W_2\in L^{q}(\R^d)$ supported on a finite union of $c$-cubes contained in balls of radius $R$ (different balls are allowed) and for some $s>0$. Then, for $1\leq q<q_0$ and $1\leq p<p_0$ satisfying
\begin{align}\label{condition for p}
    \frac{1/p-1/p_0}{1/q-1/q_0}<\frac{1-1/p_0}{1-1/q_0},
\end{align}
there exists a constant $C_{p,q,s,d}$ such that 
\begin{align}\label{prototype resolvent estimate}
        \|W_1R_0(z)^{\rm low}W_2\|_{\mathfrak{S}^{p}(L^2(\R^d))}\leq C_{p,q,s,d}\|W_1\|_{L^{2q}}\|W_2\|_{L^{2q}}
    \end{align}      
for all $W_1,W_2\in L^{q}(\R^d)$ and for all $z\in\C\setminus[0,\infty)$ with $|z|=1$.    
\end{proposition}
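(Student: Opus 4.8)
The plan is to decompose $W_1R_0(z)^{\mathrm{low}}W_2$ according to the distance between the (cube-)supports of the two factors and to play the spatial decay of the kernel of $R_0(z)^{\mathrm{low}}$ against the $\log R$ loss in \eqref{R_0 log R loss}. Fix $|z|=1$ with $\re z>0$. Write $\R^d$ as a union of unit cubes $\{Q_n\}$, put $W_{i,n}:=W_i\mathbf 1_{Q_n}$ (compatible with the reduction at the end of \S3.1 that $W_1,W_2$ are supported on unions of $c$-cubes), and set
\[
V_\ell:=\sum_{\substack{n,n'\,:\,2^{\ell-1}\le 1+|c_n-c_{n'}|<2^\ell}}W_{1,n}R_0(z)^{\mathrm{low}}W_{2,n'},\qquad \ell\ge1,
\]
so that $W_1R_0(z)^{\mathrm{low}}W_2=\sum_{\ell\ge1}V_\ell$. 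It then suffices to prove
\[
\|V_\ell\|_{\mathfrak{S}^p(L^2(\R^d))}\le C\,2^{-\delta\ell}\,\|W_1\|_{L^{2q}}\|W_2\|_{L^{2q}}
\]
for some $\delta=\delta(p,q,d)>0$, since then the claim follows by summing a geometric series.

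For a fixed $\ell$ I would cover $\R^d$ by a finitely overlapping family of balls $\{B_m\}$ of radius $\sim 2^\ell$ such that every pair $(Q_n,Q_{n'})$ contributing to $V_\ell$ lies in some $B_m$, assign each such pair to one of these balls, and write $V_\ell=\sum_m V_{\ell,m}$ accordingly, so that the two potential factors of $V_{\ell,m}$ are supported in $B_m$. For each $V_{\ell,m}$ I would establish three bounds:
\begin{itemize}
\item[(a)] $\|V_{\ell,m}\|_{\mathfrak{S}^{p_0}}\lesssim \ell^{\,s+O(1)}\,\|W_1\mathbf 1_{B_m}\|_{L^{2q_0}}\|W_2\mathbf 1_{B_m}\|_{L^{2q_0}}$, from the hypothesis \eqref{prototype spectral measure estimate} with $R\sim 2^\ell$ together with the spectral-measure-to-resolvent passage of \S3.1--\S3.2 (spatial localization to $B_m$ replaces $|\im z|$ by $\sim 2^{-\ell}$, turning the $|\log|\im z||$ loss of \eqref{spectral theorem for R0(z)} into a power of $\log 2^\ell\sim\ell$);
\item[(b)] $\|V_{\ell,m}\|_{\mathfrak{S}^{1}}\lesssim \ell^{\,O(1)}\,\|W_1\mathbf 1_{B_m}\|_{L^{2}}\|W_2\mathbf 1_{B_m}\|_{L^{2}}$, from the same passage together with the factorization \eqref{spectral measure factorization} and the elementary identity $\|W\mathcal E(\lambda)\|_{\mathfrak{S}^2}=c\|W\|_{L^2}$, which, inserted into \eqref{spectral theorem for R0(z)} via $\|AB\|_{\mathfrak{S}^1}\le\|A\|_{\mathfrak{S}^2}\|B\|_{\mathfrak{S}^2}$, carries \emph{no} power loss in $R$, so the only loss is the $\log 2^\ell$ coming from the $\lambda$-integral;
\item[(c)] $\|V_{\ell,m}\|_{\mathfrak{S}^{2}}\lesssim 2^{-(d-1)\ell/2}\,\|W_1\mathbf 1_{B_m}\|_{L^{2}}\|W_2\mathbf 1_{B_m}\|_{L^{2}}$, read off the kernel: for $|z|=1$ the kernel $K$ of $R_0(z)^{\mathrm{low}}$ obeys $|K(x,y)|\lesssim |x-y|^{-(d-1)/2}$ away from the diagonal, and on the shell $|x-y|\sim 2^\ell$ one has $\big(\iint |W_1|^2|K|^2|W_2|^2\,dx\,dy\big)^{1/2}\lesssim 2^{-(d-1)\ell/2}\|W_1\mathbf 1_{B_m}\|_{L^2}\|W_2\mathbf 1_{B_m}\|_{L^2}$.
\end{itemize}

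Complex interpolation of (a) and (b) gives, for every $(1/q,1/p)$ on the segment from $(1/q_0,1/p_0)$ to $(1,1)$, a bound for $V_{\ell,m}$ in $\mathfrak{S}^p$ against $\|W_i\mathbf 1_{B_m}\|_{L^{2q}}$ with at most a fixed power of $\ell$ as loss; interpolating this in turn with the Hilbert--Schmidt bound (c) and using $\|\cdot\|_{\mathfrak{S}^p}\le\|\cdot\|_{\mathfrak{S}^{p'}}$ for $p\ge p'$, one obtains for every $(p,q)$ satisfying \eqref{condition for p} a bound $\|V_{\ell,m}\|_{\mathfrak{S}^p}\lesssim 2^{-\delta'\ell}\,\ell^{\,O(1)}\|W_1\mathbf 1_{B_m}\|_{L^{2q}}\|W_2\mathbf 1_{B_m}\|_{L^{2q}}$ with $\delta'>0$: indeed \eqref{condition for p} describes precisely the open region swept out when the Hilbert--Schmidt vertex $(1,1/2)$ carries positive weight, which is what produces the geometric gain needed to beat the logarithm. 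Reassembling via an almost-orthogonality estimate for Schatten norms of operators whose domains and ranges have bounded overlap (the $\{B_m\}$ have bounded overlap, and at the relevant exponents the $\ell^p$-- or $\ell^{p'}$--sum over $m$ is controlled by $\sum_m\|W_i\mathbf 1_{B_m}\|_{L^{2q}}^{2q}\lesssim\|W_i\|_{L^{2q}}^{2q}$) yields $\|V_\ell\|_{\mathfrak{S}^p}\lesssim 2^{-\delta\ell}\|W_1\|_{L^{2q}}\|W_2\|_{L^{2q}}$, and summing over $\ell$ finishes the proof.

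The technical heart --- and the main obstacle --- is twofold: first, the Schatten almost-orthogonality in the reassembly step, which for $p\ne2$ is not the elementary Hilbert--Schmidt additivity and requires care with the exponent of the $\ell^p$/$\ell^{p'}$ summation in $m$; second, obtaining the trace-ideal bound (b) for the \emph{shell-restricted} operator $V_{\ell,m}$ with only logarithmic rather than polynomial loss in $2^\ell$ --- it is exactly this that makes the slope $\tfrac{1-1/p_0}{1-1/q_0}$ of the segment to $(1,1)$ the right-hand side of \eqref{condition for p}, whereas a cruder trace-class bound would force a strictly smaller range. The remaining ingredients --- the kernel bound $|K(x,y)|\lesssim|x-y|^{-(d-1)/2}$, the bounded-overlap covering, and the complex interpolation of bilinear maps with values in Schatten classes --- are standard, and this overall scheme is an $\varepsilon$-removal argument in the spirit of \cite{MR1666558}, adapted to trace ideals as in \cite[\S7.2]{cuenin2022random}.
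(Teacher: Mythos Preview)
Your approach is genuinely different from the paper's and, modulo the caveats below, is a viable alternative. The paper does \emph{not} decompose the operator by distance shells; instead it decomposes the \emph{functions} $W,W'$ themselves: first a horizontal (Lorentz-type) layer-cake $W=\sum_i W_i$ with $|\{W_i\neq 0\}|\sim 2^i$, then Tao's sparse decomposition $W_i=\sum_{j,k}W_{ijk}$ into pieces supported on $\gamma$-sparse families of balls of radius $R_i$. For each pair of pieces the paper interpolates, pointwise in the singular values $s_n$, between the $\mathfrak S^{p_0}$/log-loss bound (your (a)) and a $\mathfrak S^{1+\delta}$ bound with distance decay (your (b)+(c) combined, stated as a separate lemma). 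Sparsity makes the $k,k'$ sums convergent; the Lorentz decomposition handles the $i,i'$ sums; and the final assembly uses the equivalent \emph{norm} on the weak Schatten class $\mathfrak S^r_w$ (for $r>1$), so that the triangle inequality is available without any block-orthogonality argument. What your scheme buys is conceptual simplicity (no sparse-collection combinatorics); what the paper's scheme buys is a cleaner reassembly that works for all $p_0,q_0\ge 1$.

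Two points in your outline need care. First, the shell restriction couples the two sides, so $V_{\ell,m}$ is not literally of the form $W_1'R_0W_2'$ and bounds (a),(b) do not apply to it as stated; the clean fix is to replace your unit-cube shell by a Whitney decomposition of $\R^d\times\R^d\setminus\{\text{diagonal}\}$, so that each piece is $(W_1\mathbf 1_{P_\alpha})R_0(W_2\mathbf 1_{P_\beta})$ for cubes of side $\sim 2^\ell$ at mutual distance $\sim 2^\ell$, to which (a)--(c) apply directly, with $O(1)$ partners $\beta$ per $\alpha$. Second, your reassembly by block-orthogonality gives $\|V_\ell\|_{\mathfrak S^{p'}}^{p'}\lesssim\sum_\alpha\|W_1\mathbf 1_{P_\alpha}\|_{2q}^{p'}\|W_2\mathbf 1_{P_{\sigma(\alpha)}}\|_{2q}^{p'}$, and bounding this by $\|W_1\|_{2q}^{p'}\|W_2\|_{2q}^{p'}$ via Cauchy--Schwarz and $\ell^{2q}\hookrightarrow\ell^{2p'}$ requires $p'\ge q$; this is automatic when $p_0\ge q_0$ (true in every application here, and in particular for the Frank--Sabin exponents), but is not implied by the hypotheses of the proposition in full generality. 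The paper's weak-Schatten assembly avoids this constraint altogether.
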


\begin{remark}
    By \cite[Theorem 2]{MR3730931}, the assumed bound \eqref{prototype spectral measure estimate} is satisfied iff $q_0\leq (d+1)/2$ and $p_0\leq (d-1)q/(d-q)$. The resolvent bound \eqref{prototype resolvent estimate}, including the endpoint $q=q_0$, follows from \cite[Theorem 12]{MR3730931}. The Schatten exponent in both theorems there is $p=(d-1)q/(d-q)$. Using only \cite[Theorem 2]{MR3730931}, Proposition \ref{From spectral measure to resolvent bounds} asserts that \eqref{prototype resolvent estimate} holds for all $q<(d+1)/2$, with a slightly worse Schatten exponent $p$ strictly greater than, but arbitrarily close to, $(d-1)q/(d-q)$ (note that the latter is an increasing function of $q$).
    Thus, in practice, Proposition \ref{From spectral measure to resolvent bounds} does not give anything new. What is important is the technique, which will be used to prove Theorem \ref{theorem resolvent estimate}. 
\end{remark}

To free up notation, we will use $W, W'$ instead of $W_1, W_2$ in the proof of Proposition \ref{From spectral measure to resolvent bounds}. We will perform several decompositions of $W, W'$, and we start explaining these for a non-negative simple function $W$, supported on a finite union of $c$-cubes contained in a ball $B_R$. 
\begin{itemize}
    \item Horizontal dyadic decomposition: we write $W=\sum_{i\in\N}W_{i}$, where
    \begin{align}
        W_{i}\coloneqq W\mathbf{1}_{H_{i}\geq W\geq H_{i+1}},\hspace{0.5cm}H_{i}\coloneqq\inf\{\,t>0\,:\,|\{W>t\}|\leq2^{i-1}\,\},
    \end{align}
    then
\begin{align*}
\|H_i2^{i/q}\|_{\ell^r_i(\Z_+)}\asymp \|W\|_{L^{q,r}},
\end{align*}
where $L^{q,r}$ denotes a Lorentz space (see e.g.\ \cite[Thm. 6.6]{TaoNotes1247A}). Also note that $L^{q,q}=L^q$.\\
    \item Sparse decomposition: we write
\begin{align}\label{sparse decomp.}
W_i=\sum_{j=1}^{K_i}\sum_{k=1}^{N_i}W_{ijk},
\end{align}
where, for fixed $i,j$, the $W_{ijk}$ are supported on a ``sparse collection" of balls $\{B(x_k,R_i)\}_{k=1}^{N_i}$. By this we mean that the support of $W_{ijk}$ is contained in $B(x_k,R_i)$ and that the following definition is satisfied (cf.\ \cite[Def. 3.1]{MR1666558}) for some sufficiently large $\gamma$ (to be fixed later).
\end{itemize}
\begin{definition}
A collection $\{B(x_{k},R)\}_{k=1}^{N}$ is $\gamma$-sparse if the centers $x_{k}$ are $(RN)^{\gamma}$ separated.
\end{definition}
For fixed $\gamma>0$ and $K\geq 1$, \cite[Lemma 3.3]{MR1666558} asserts that \eqref{sparse decomp.} holds with 
\begin{align}\label{KiNiRi}
K_i=\mathcal{O}(K2^{i/K}),\quad N_i=\mathcal{O}(2^i),\quad R_i=\mathcal{O}(2^{i\gamma^K}).
\end{align}

We will also need the following estimate.
\begin{lemma}
    There exists a constant such that for all $z\in\C\setminus[0,\infty)$, $|z|=1$, for all $W,W'\in L^{q}(\R^d)$ supported on a finite union of $c$-cubes contained in balls of radius $R$ with $\dist(\supp(W),\supp(W'))>0$, and for all $\delta\in [0,1]$, we have 
    \begin{align}\label{decay bound resolvent almost trace norm}
        \|WR_0(z)^{\rm low}W'\|_{\mathfrak{S}^{1+\delta}}\leq C (\log R)\dist(\supp(W),\supp(W'))^{-\frac{(d-1)\delta}{1+\delta}}\|W\|_{L^{2}}\|W'\|_{L^{2}}.
    \end{align}
\end{lemma}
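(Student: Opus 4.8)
The plan is to prove the estimate by combining the logarithmic spectral-measure bound \eqref{R_0 log R loss} (applied to a suitably thickened pair of supports) with an off-diagonal decay gain coming from the separation of the supports. Write $d(W,W')=\dist(\supp(W),\supp(W'))$. We may assume $d(W,W')\gg 1$, since for $d(W,W')\lesssim 1$ the claimed bound is weaker than \eqref{R_0 log R loss} with $\delta$ replaced by $0$ (and the exponent of $d(W,W')$ is bounded). Recall that after the naive reduction we may assume $W_1,W_2$ are constant on the unit scale and supported on unions of $c$-cubes; by \eqref{R_0 log R loss} we already have the case $\delta=0$,
\begin{align}\label{base case delta zero}
\|WR_0(z)^{\rm low}W'\|_{\mathfrak{S}^1}\lesssim (\log R)\|W\|_{L^2}\|W'\|_{L^2}.
\end{align}
Note that, since $W,W'$ have bounded overlap with the unit scale and $d(W,W')\gg 1$, we automatically control $\|W\|_{L^2}\lesssim \|W\|_{L^\infty}$ type quantities; but it is cleaner to keep $L^2$ norms throughout.

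First I would establish a pointwise kernel bound for $WR_0(z)^{\rm low}W'$. The kernel of $R_0(z)^{\rm low}$ is $\chi$-localized in frequency to $|\xi|\lesssim 1$, so its kernel $G_z^{\rm low}(x-y)$ satisfies $|G_z^{\rm low}(x-y)|\lesssim_N |x-y|^{-N}$ for $|x-y|\gtrsim 1$, with constants uniform in $z$ with $|z|=1$ (this is the standard non-stationary phase/integration-by-parts estimate for the smooth compactly supported multiplier $\chi(\xi)(\xi^2-z)^{-1}$; the only subtlety is uniformity as $\im z\to 0$, but the frequency cutoff keeps us away from the singularity except on a set of measure $\to 0$, and on that set $|\xi^2-z|^{-1}$ is still integrable, giving a kernel bound $\lesssim |x-y|^{-N}$ for any $N$ after repeated integration by parts in the directions transverse to the sphere, with an extra harmless $|\log|\im z||$ that we will not even need). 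Consequently, when $d(W,W')>0$, the Hilbert–Schmidt norm obeys
\begin{align}\label{HS off diagonal}
\|WR_0(z)^{\rm low}W'\|_{\mathfrak{S}^2}^2
=\iint |W(x)|^2|G_z^{\rm low}(x-y)|^2|W'(y)|^2\,\rd x\,\rd y
\lesssim_N d(W,W')^{-N}\|W\|_{L^2}^2\|W'\|_{L^2}^2,
\end{align}
for any $N$, since on the support $|x-y|\geq d(W,W')$. In particular
\begin{align}\label{HS off diagonal N1}
\|WR_0(z)^{\rm low}W'\|_{\mathfrak{S}^2}\lesssim d(W,W')^{-(d-1)}\|W\|_{L^2}\|W'\|_{L^2}.
\end{align}

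The second step is complex interpolation between \eqref{base case delta zero} and \eqref{HS off diagonal N1}. By the standard interpolation of Schatten norms, for $\theta\in[0,1]$ with $\frac1{p}=\frac{1-\theta}{1}+\frac{\theta}{2}=1-\frac\theta2$, i.e.\ $p=\frac{2}{2-\theta}$, equivalently $\theta=\frac{2(p-1)}{p}=\frac{2\delta}{1+\delta}$ when $p=1+\delta$, we obtain
\begin{align}\label{interpolated}
\|WR_0(z)^{\rm low}W'\|_{\mathfrak{S}^{1+\delta}}
\lesssim \big((\log R)\big)^{1-\theta}\,\big(d(W,W')^{-(d-1)}\big)^{\theta}\,\|W\|_{L^2}\|W'\|_{L^2}
\leq (\log R)\,d(W,W')^{-\frac{(d-1)\delta}{1+\delta}}\|W\|_{L^2}\|W'\|_{L^2},
\end{align}
using $1-\theta\leq 1$ and $\log R\geq 0$ (we may assume $R\geq 2$). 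This is precisely \eqref{decay bound resolvent almost trace norm}. To make the interpolation rigorous one runs the usual three-lines argument on the analytic family $W R_0(z)^{\rm low} W'$ viewed as a fixed operator and interpolates the pair of Schatten exponents $(1,2)$; since the operator does not depend on the interpolation parameter, this is the elementary Schatten-space interpolation inequality $\|T\|_{\mathfrak S^{p}}\leq \|T\|_{\mathfrak S^{1}}^{1-\theta}\|T\|_{\mathfrak S^{2}}^{\theta}$, which follows from $\|T\|_{\mathfrak S^p}^p=\sum_n s_n(T)^p$ and Hölder on the singular values.

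The main obstacle is the uniformity of the kernel decay bound \eqref{HS off diagonal} as $\im z\to0$: one must check that the logarithmic blow-up visible in the naive bound does not contaminate the off-diagonal regime. This is handled by exploiting the frequency cutoff $\chi$: write $G_z^{\rm low}(x)=\int \chi(\xi)(\xi^2-z)^{-1}\e^{\I x\cdot\xi}\,\rd\xi$ and integrate by parts $N$ times in the radial direction away from $|\xi|=1$ and in tangential directions near $|\xi|=1$; each derivative of $(\xi^2-z)^{-1}$ costs at most $|\xi^2-z|^{-1}$, and the resulting integrand is integrable uniformly in $z$ on the support of $\chi$ after at most one logarithm, which is then absorbed into the $(\log R)$ factor on the right-hand side of \eqref{decay bound resolvent almost trace norm} (recall $R\gg1$, so $\log R\gtrsim 1$). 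Alternatively, and perhaps more cleanly, one can use the representation \eqref{spectral theorem for R0(z)} together with the fact that $\|W\,\rd E(\lambda)/\rd\lambda\,W'\|_{\mathfrak S^1}$ enjoys rapid decay in $\lambda\,d(W,W')$ (again by non-stationary phase in the extension operator $\mathcal E(\lambda)$), and then integrate in $\lambda$ against $|\lambda^2-z|^{-1}$: the region $|\lambda^2-\re z|\lesssim |\im z|$ contributes $\lesssim |\log|\im z||$ times a gain $d(W,W')^{-N}$, while the rest is $O(d(W,W')^{-N})$; the surviving $|\log|\im z||$ is converted to $\log R$ exactly as in the smoothing step of Section~3.2 (localizing to $B_R$ replaces $|\lambda^2-z|$ by $|\lambda^2-z|+1/R$, capping the logarithm at $\log R$). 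Either route gives \eqref{decay bound resolvent almost trace norm}, and the rest is the routine interpolation above.
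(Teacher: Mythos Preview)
Your overall strategy---interpolate a $\mathfrak{S}^1$ bound carrying the $\log R$ loss against a $\mathfrak{S}^2$ bound carrying the off-diagonal decay---is exactly what the paper does. However, your execution of the $\mathfrak{S}^2$ endpoint contains a genuine error.

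You claim that the kernel $G_z^{\rm low}(x)=\int\chi(\xi)(\xi^2-z)^{-1}\e^{\I x\cdot\xi}\,\rd\xi$ satisfies $|G_z^{\rm low}(x)|\lesssim_N |x|^{-N}$ for every $N$, uniformly in $z$ with $|z|=1$. This is false: as $\im z\to 0$ the symbol develops a singularity on the sphere $|\xi|=\sqrt{\re z}$, so integration by parts breaks down there, and tangential integrations by parts near the sphere cannot beat the stationary-phase contribution. The correct uniform bound is $|G_z^{\rm low}(x)|\lesssim (1+|x|)^{-(d-1)/2}$ (see the stationary-phase references the paper cites), which yields
\[
\|WR_0(z)^{\rm low}W'\|_{\mathfrak{S}^2}\lesssim \dist(\supp W,\supp W')^{-\frac{d-1}{2}}\|W\|_{L^2}\|W'\|_{L^2},
\]
not your exponent $-(d-1)$. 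Your alternative route through the spectral measure fails for the same reason: the kernel of $\mathcal{E}(\lambda)\mathcal{E}(\lambda)^*$ is $\widehat{\rd S}(\lambda(x-y))\asymp(\lambda|x-y|)^{-(d-1)/2}$, not rapidly decaying. Fortunately, once you plug in the correct $\mathfrak{S}^2$ exponent $-(d-1)/2$, your interpolation with $\theta=\tfrac{2\delta}{1+\delta}$ gives exactly $d(W,W')^{-(d-1)\delta/(1+\delta)}$, as required; so the error is in the kernel claim, not in the architecture.

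A smaller point: your $\mathfrak{S}^1$ endpoint \eqref{base case delta zero} does not follow from \eqref{R_0 log R loss}, since that estimate requires $p>\tfrac{(d-1)q}{d-q}>1$ even at $q=1$. The paper instead obtains the $\mathfrak{S}^1$ bound from the factorization $\rd E/\rd\lambda=c_d\lambda^{d-2}\mathcal{E}\mathcal{E}^*$ together with the trivial Hilbert--Schmidt bounds $\|W\mathcal{E}\|_{\mathfrak{S}^2}\lesssim\|W\|_{L^2}$ (compactness of the sphere), which give $\|W\mathcal{E}\mathcal{E}^*W'\|_{\mathfrak{S}^1}\lesssim\|W\|_{L^2}\|W'\|_{L^2}$; the smoothing argument then converts this into the $\mathfrak{S}^1$ resolvent bound with the $\log R$ factor.
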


\begin{proof}
    The result will follow from interpolation between the two estimates
    \begin{align}\label{resolvent decay HS bound}
        \|WR_0(z)^{\rm low}W'\|_{\mathfrak{S}^2}\lesssim \dist(\supp(W),\supp(W'))^{-\frac{d-1}{2}}\|W\|_{L^{2}}\|W'\|_{L^{2}}
    \end{align}
and 
    \begin{align}\label{resolvent trace norm bound}
        \|WR_0(z)^{\rm low}W'\|_{\mathfrak{S}^2}\lesssim (\log R)\|W\|_{L^{2}}\|W'\|_{L^{2}}.
    \end{align}
The first estimate follows from the kernel bound
    \begin{align}
        |R_0(z)^{\rm low}(x-y)|\lesssim (1+|x-y|)^{-\frac{d-1}{2}},
    \end{align}
    which in turn is a consequence of a standard stationary phase argument (see e.g. \cite{MR3608659} or \cite{MR4153099}). The second estimate follows from 
\begin{align}\label{extension trace norm bound}
    \|W\mathcal{E}\mathcal{E}^*W'\|_{\mathfrak{S}^1}\lesssim \|W\|_{L^{2}}\|W'\|_{L^{2}}
\end{align}    
together with the smoothing argument leading to \eqref{R_0 log R loss}. Since $\mathbb{S}^{d-1}$ is compact, we have the obvious Hilbert-Schmidt bounds
\begin{align}
        \|W\mathcal{E}\|_{\mathfrak{S}^2}\lesssim \|W\|_{L^{2}},\quad \|\mathcal{E}^*W'\|_{\mathfrak{S}^1}\lesssim \|W'\|_{L^{2}},
\end{align}
which immediately imply \eqref{extension trace norm bound}. Interpolating \eqref{resolvent decay HS bound} and \eqref{resolvent trace norm bound} yields, for any $p\in [1,2]$,
   \begin{align}
        \|WR_0(z)^{\rm low}W'\|_{\mathfrak{S}^p}\lesssim (\log R)^{2/p-1}\dist(\supp(W),\supp(W'))^{-(d-1)(1-1/p)}\|W\|_{L^{2}}\|W'\|_{L^{2}}.
    \end{align}
Setting $p=1+\delta$ and recalling that $R\gg 1$ yields the claim.
\end{proof}

\begin{proof}[Proof of Proposition \ref{From spectral measure to resolvent bounds}]
We write $W,W'$ instead of $W_1,W_2$ and apply the above decomposition:
 \begin{align}
     W=\sum_{\alpha}W_{\alpha},\quad      W'=\sum_{\alpha'}W'_{\alpha'},
 \end{align}
 where
 $\alpha=(i,j,k)$, $\alpha'=(i',j',k')$ and $i,i'\in\Z_+$, $1\leq j\leq K_{i}$, $1\leq j'\leq K_{i'}$, $1\leq k\leq N_{i}$ and $1\leq k'\leq N_{i'}$. Set
 \begin{align*}
L_{\alpha,\alpha'}&:=1+\dist(\supp(W_{\alpha}),\supp(W'_{\alpha'})),\\
\rho_{\alpha,\alpha'}&:=2(1+L_{\alpha,\alpha'}+R_{i}+R_{i'}).
 \end{align*}
 In the following, we simply write $R_0$ instead of $R_0(z)^{\rm low}$.  
 By \eqref{R_0 log R loss},
 \begin{align}
 s_n(W_{\alpha}R_0W'_{\alpha'})\lesssim n^{-\frac{1}{p_0}} (\log \rho_{\alpha,\alpha'})^{s+1}\|W_{\alpha}\|_{L^{2q_0}}\|W'_{\alpha'}\|_{L^{2q_0}},
 \end{align}
 whereas \eqref{decay bound resolvent almost trace norm} implies
  \begin{align}\label{Euclidean sn interpolated}
 s_n(W_{\alpha}R_0W'_{\alpha'})\lesssim n^{-\frac{1}{1+\delta}} L_{\alpha,\alpha'}^{-\frac{(d-1)\delta}{1+\delta}}\|W_{\alpha}\|_{L^{2}}\|W'_{\alpha'}\|_{L^{2}}.
 \end{align}
 Interpolating the last two displays yields, for any $\theta\in (0,1)$,
 \begin{align}
 s_n(W_{\alpha}R_0W'_{\alpha'})\lesssim_{\theta} n^{-\frac{\theta}{1+\delta}-\frac{1-\theta}{p_0}}  L_{\alpha,\alpha'}^{-\frac{\theta(d-1)\delta}{2(1+\delta)}}\log^{s+1}(2+R_i+R_{i'})H_iH_{i'}'2^{(i+i')(\frac{\theta}{2}+\frac{1-\theta}{2q_0})}.   
 \end{align}
where we used the fact that
  \begin{align}
     L_{\alpha,\alpha'}^{-\frac{\theta(d-1)\delta}{1+\delta}}(\log^{s+1} \rho_{\alpha,\alpha'})^{1-\theta}\lesssim_{\theta}
     L_{\alpha,\alpha'}^{-\frac{(d-1)\delta}{2(1+\delta)}}\log^{s+1}(2+R_i+R_{i'}).
 \end{align}
 Summing \eqref{Euclidean sn interpolated} first over $k$, then $k'$, 
 \begin{align}
 \sum_{k,k'} s_n(W_{\alpha}R_0W'_{\alpha'})\lesssim_{\theta} n^{-\frac{\theta}{1+\delta}-\frac{1-\theta}{p_0}}  \log^{s+1}(2+R_i+R_{i'})H_iH_{i'}'2^{(i+i')(\frac{\theta}{2}+\frac{1-\theta}{2q_0})}.
 \end{align}
 Here we have used the fact that, for $\alpha=(i,j,k)$, $\alpha'=(i',j',k')$ and $i,j,i',j',k'$ fixed, the sum over $k$ is bounded,
\begin{align}\label{supsum}
\sum_{k\leq N_{i}} (1+\rd(B(x_{k},R_{i}),B(x_{k'},R_{i'}))^{-\frac{\theta(d-1)\delta}{2(1+\delta)}}\leq 1+2N_i(N_{i}R_{i})^{-\frac{\gamma\theta(d-1)\delta}{2(1+\delta)}}\lesssim 1,
\end{align}
uniformly in $i,j,i',j',k'$, provided $\frac{\gamma\theta(d-1)\delta}{2(1+\delta)}\geq 1$.
We will momentarily fix $\theta,\delta$, and then fix $\gamma$ such that $\frac{\gamma\theta(d-1)\delta}{2(1+\delta)}=1$.
Note that, even though the balls in \eqref{supsum} may belong to different sparse families, we have
\begin{align*}
\dist(B(x_{k},R_{i}),B(x_{k'},R_{i'}))\geq \frac{1}{2}(N_{i}R_{i})^{\gamma}
\end{align*}
for all but at most one $k$. Indeed, suppose for contradiction that this does not hold for two distinct $k_1,k_2$. Then, by the triangle inequality,
\begin{align*}
\rd(B(x_{k_1},R_{i}),B(x_{k_2},R_{i}))<(N_{i}R_{i})^{\gamma},
\end{align*}
which contradicts the sparsity of the collection $\{B(x_{k},R_{i})\}_{k=1}^{N_i}$.
 Summing over $j$ then $j'$ (recall $j\leq K_i$, $j'\leq K_{i'}$),
 \begin{align}
     \sum_{j,j'}\sum_{k,k'} s_n(W_{\alpha}R_0W'_{\alpha'})\lesssim_{\theta} K_{i}K_{i'} n^{-\frac{\theta}{1+\delta}-\frac{1-\theta}{p_0}}  \log^{s+1}(2+R_i+R_{i'})H_iH_{i'}'2^{(i+i')(\frac{\theta}{2}+\frac{1-\theta}{2q_0})}.   
 \end{align}
 Finally, summing over $i,i'$ and using \eqref{KiNiRi},
 \begin{align}
 \sum_{\alpha,\alpha'} s_n(W_{\alpha}R_0W'_{\alpha'})&\lesssim_{\theta} n^{-\frac{\theta}{1+\delta}-\frac{1-\theta}{p_0}}K^2\sum_{i,i'} \log^{s+1}(2+R_i+R_{i'})H_iH_{i'}'2^{(i+i')(\frac{\theta}{2}+\frac{1-\theta}{2q_0}+\frac{1}{K})}\\
 &\lesssim_{\theta,K,s}
 n^{-\frac{\theta}{1+\delta}-\frac{1-\theta}{p_0}}\sum_{i} H_i2^{i(\frac{\theta}{2}+\frac{1-\theta}{d+1}+\frac{2}{K})}\sum_{i'} H'_{i'}2^{i'(\frac{\theta}{2}+\frac{1-\theta}{2q_0}+\frac{2}{K})},
 \end{align}
 where we estimated $\log^{s+1} R_i\lesssim_{K,s} 2^{\frac{i}{K}}$ in the second line. We now fix $\theta,\delta>0$ and $K\geq 1$ in such a way that 
 \begin{align}
     &\frac{\theta}{2}+\frac{1-\theta}{2q_0}+\frac{2}{K}=\frac{1}{2q},\\
     &\frac{\theta}{1+\delta}+\frac{1-\theta}{p_0}<\frac{1}{p}.
 \end{align}
 This is always possible since $q<q_0$ and $p$ satisfies \eqref{condition for p}.
 Set $\eps:=\frac{1}{p}-\frac{\theta}{1+\delta}-\frac{1-\theta}{p_0}>0$. Then
 \begin{align}
     \sum_{\alpha,\alpha'} s_n(W_{\alpha}R_0W'_{\alpha'})&\lesssim_{q} n^{-\frac{1}{p}-\eps}\|W\|_{L^{2q,1}}\|W'\|_{L^{2q,1}},
 \end{align}
 and for any $N\in \N$, by Fubini,
 \begin{align}\label{Fubini}
    \sum_{\alpha,\alpha'} \sum_{n=1}^Ns_n(W_{\alpha}R_0W'_{\alpha'}) 
    \lesssim_{q} N^{1-\frac{1}{p}-\eps}\|W\|_{L^{2q,1}}\|W'\|_{L^{2q,1}}.
 \end{align}
Recall that, for a compact operator $\Sigma$,
\begin{align*}
\|\Sigma\|_{\mathfrak{S}_w^r}^{'}:=\sup_{n\in\N}n^{\frac{1}{r}}s_n(\Sigma)
\end{align*}
defines a quasinorm. For $2<r<\infty$,
an equivalent norm (hence satisfying the triangle inequality) is given by
\begin{align*}
\|\Sigma\|_{\mathfrak{S}_w^r}=\sup_{N\in\N} N^{-1+\frac{1}{r}}\sum_{n=1}^N s_n(\Sigma).
\end{align*}
Set $\frac{1}{r}:=\frac{1}{p}+\eps$. Then, by the triangle inequality and \eqref{Fubini},
\begin{align}
\|WR_0W'\|_{\mathfrak{S}_w^r}
\leq \sup_{N\in\N} N^{-1+\frac{1}{r}}\sum_{\alpha,\alpha'}\sum_{n=1}^Ns_n(W_{\alpha}R_0W'_{\alpha'})\lesssim_q \|W\|_{L^{2q,1}}\|W'\|_{L^{2q,1}}.
\end{align}
In particular, since $r<p$,
\begin{align}
   \|WR_0W'\|_{\mathfrak{S}^{p}}\lesssim_q \|W\|_{L^{2q}}\|W'\|_{L^{2q}},
\end{align}    
where we used that ${\mathfrak{S}_w^r}\subset \mathfrak{S}^{p_0}$ and $L^{2q,1}\subset L^{2q}$.
\end{proof}

\subsection{Proof of Theorem \ref{theorem resolvent estimate}}
The proof is similar to that of Proposition \ref{From spectral measure to resolvent bounds}. The difference is that we use \eqref{eq. spectral measure bound}, which only holds for radial functions, instead of \eqref{prototype spectral measure estimate} as input. Again, we write $W,W'$ instead of $W_1,W_2$ and decompose 
 \begin{align}
     w=\sum_{\alpha}w_{\alpha},\quad      w'=\sum_{\alpha'}w'_{\alpha'},
 \end{align}
 where $W(x)=w(|x|)$ and $W'(x)=w'(|x|)$. The balls in the sparse collections (in the proof of Proposition \ref{From spectral measure to resolvent bounds}) are now intervals, but \eqref{KiNiRi} still holds (this is just the one-dimensional case in \cite[Lemma 3.3]{MR1666558}). The remainder of the proof is the same as before.

\section{Eigenvalue sums}

\subsection{Proof of Theorem \ref{theorem sum of eigenvalues upper bound}} A special case ($\rho=0$) of \cite[Theorem 3.1]{MR3717979} states that if $p\geq 1$, $\sigma>0$ and if $K(z)$,
$z\in\C\setminus [0,\infty)$, is an analytic family of operators satisfying
\begin{align}\label{K(z) bound}
    \|K(z)\|_{\mathfrak{S}^p}\leq M|z|^{-\sigma}\quad\forall z\in\C\setminus [0,\infty),
\end{align}
then the finite-type eigenvalues $z_j$ of $1+K$ satisfy 
\begin{align}\label{Frank III special case}
    \sum_j \delta(z_j)|z_j|^{-\frac{1}{2}+\frac{1}{2}(2p\sigma-1+\eps)_+}\leq C_{p,\sigma,\eps} M^{\frac{1}{2\sigma}(1+(2p\sigma-1+\eps)_+)}
\end{align}
for any $\eps>0$. We use this with $K(z)=\sqrt{|V|}R_0(z)\sqrt{V}$, in which case the $z_j$ are the eigenvalues of $H=-\Delta+V$ (see \cite{MR3717979} for details). By Theorem \ref{theorem resolvent estimate}, since $V$ is assumed to be radial, \eqref{K(z) bound} holds with $M=C_{p,q,d}\|V\|_{L^{q}}$, $\sigma=1-\frac{d}{2q}$ and any $p>\frac{(d-1)q}{d-q}$. Since this implies that $p>\frac{q}{2q-d}$, we have $2p\sigma-1>0$, and hence we can omit the notation for the positive part, $(\ldots)_+$, in \eqref{Frank III special case}. Since we are imposing an open condition for $p$, we can absorb $\eps$ into $p$ and omit it from the inequality. The result is \eqref{eq. sums of eigenvalues main theorem}, so the proof of Theorem \ref{theorem sum of eigenvalues upper bound} is complete.

\subsection*{Acknowledgements} 
The first author was supported by the Engineering and Physical Sciences Research Council (EPSRC) [grant number EP/X011488/1]. The second author was supported by an EPSRC Doctoral Training Partnership (DTP) student grant at Loughborough University.

\bibliographystyle{plain}
\bibliography{Bibliography.bib}
\end{document}